\newtheorem*{thm*}{Theorem}
\newtheorem*{conj*}{Conjecture}
\newtheorem*{remark}{Remark}
\newtheorem{theorem}{Theorem}[section]
\newtheorem{lemma}[theorem]{Lemma}
\newtheorem{proposition}[theorem]{Proposition}
\newtheorem{corollary}[theorem]{Corollary}
\newcommand{\Z}{\mathbb{Z}}
\newcommand{\SL}{\operatorname{SL}}
\newcommand{\ep}{\varepsilon}
\DeclarePairedDelimiterX{\pmodx}[1]{(}{)}{{\operator@font mod}\mkern6mu#1}
\renewcommand{\pmod}{%
  \allowbreak
  \if@display\mkern18mu\else\mkern8mu\fi
  \pmodx
}
\numberwithin{equation}{section}
\begin{document}
\title{Some Remarks on Small Values of $\tau(n)$}

\author[K. Lakein, A. Larsen]{Kaya Lakein and Anne Larsen}
\address{Department of Mathematics, Stanford University, Stanford, CA 94305}
\email[K. Lakein]{epi2@stanford.edu}
\address{Department of Mathematics, Harvard University, Cambridge, MA 02138}
\email[A. Larsen]{larsen@college.harvard.edu}

\begin{abstract}
A natural variant of Lehmer's conjecture that the Ramanujan $\tau$-function never vanishes asks whether, for any given integer $\alpha$, there exist any $n \in \Z^+$ such that $\tau(n) = \alpha$. A series of recent papers excludes many integers as possible values of the $\tau$-function using the theory of primitive divisors of Lucas numbers, computations of integer points on curves, and congruences for $\tau(n)$. We synthesize these results and methods to prove that if $0 < \abs{\alpha} < 100$ and $\alpha \notin T := \{2^k, -24,-48, -70,-90, 92, -96\}$, then $\tau(n) \neq \alpha$ for all $n > 1$.
Moreover, if $\alpha \in T$ and $\tau(n) = \alpha$, then $n$ is square-free with prescribed prime factorization. Finally, we show that a strong form of the Atkin-Serre conjecture implies that $\abs{\tau(n)} > 100$ for all $n > 2$.
\end{abstract}

\maketitle

\section{Introduction and Statement of Results}\label{sec: intro}
In his 1916 paper titled ``On certain arithmetical functions,'' Ramanujan defined the function $\tau(n)$ to be the Fourier coefficients of the normalized weight 12 cusp form for $\SL_2(\Z)$ given by
\begin{equation}
    \Delta(z) = \sum_{n=1}^\infty \tau(n)q^n:= q\prod_{n=1}^\infty (1-q^n)^{24} = q-24q^2+252q^3-1472q^4+4830q^5-\dots,
\end{equation}
where $q := e^{2\pi i z}$. The $\tau$-function plays an important role in the theory of modular forms, yet some of its basic properties remain unknown. For instance, Lehmer's 1947 conjecture \cite{Lehmer} that $\tau(n)$ never vanishes remains open. 

A natural variant of Lehmer's conjecture asks whether, for any given integer $\alpha$, there exist any $n \in \Z^+$ such that $\tau(n) = \alpha$. Much is known for odd $\alpha$ due to the fact that 
\begin{equation}\label{eq: deltamod2}
    \Delta(z) \equiv \sum_{n=0}^\infty q^{(2n+1)^2} \pmod 2.
\end{equation}

In 1987, Murty, Murty, and Shorey \cite{MMS} showed that for any odd integer $\alpha$, we have $\tau(n) = \alpha$ for at most finitely many $n \in \Z^+$. It was not until 2013, however, that the first integers were ruled out as values of $\tau(n)$, when Lygeros and Rozier \cite{LR} proved that $\tau(n) \neq \pm 1$ for all $n > 1$. In 2020, Balakrishnan, Craig, Ono, and Tsai \cite{BCO,BCOT} introduced new methods to the problem, and showed that
$$\tau(n) \notin \{\pm 1, \pm 3, \pm 5, \pm 7, \pm 13, \pm 17, -19, \pm 23, \pm 37, \pm 691\}.$$
Later that year, Hanada and Madhukara \cite{HM} additionally proved that $$\tau(n) \notin \{-9,\pm 15,\pm 21,-25,-27,-33,\pm 35, \pm 45, \pm 49,-55,\pm 63, \pm 77, -81,\pm 91\},$$ and Dembner and Jain \cite{DJ} showed that $\tau(n) \neq \pm \ell$, where $\ell < 100$ is any odd prime. Shortly thereafter, Bennett, Gherga, Patel, and Siksek \cite{Bennett} proved that the same is true for any positive power of $\ell$, and that $\tau(n) \neq \pm 3^a 5^b 7^c 11^d$ for any $a,b,c,d \in \Z_{\ge 0}$ and any $n > 1$. Using some of these results concerning odd primes, Balakrishnan, Ono, and Tsai \cite{BOT} were able to make progress eliminating non-zero even integers as values of the $\tau$-function. For odd primes $\ell$, they showed that $$\tau(n) \notin \{\pm 2\ell:\, \ell < 100\} \cup \{\pm 2\ell^2:\, \ell < 100\} \cup \{\pm 2\ell^3:\, \ell < 100 \text{ with } \ell \neq 59\}.$$

The methods used to obtain these results include the theory of primitive divisors of Lucas numbers, the paucity of integer points on hyperelliptic curves and Thue equations, and the congruences for the $\tau$-function (see \eqref{eq: congs} and \eqref{eq: pmod}) arising from the theory of 2-dimensional Galois representations due to Serre and Swinnerton-Dyer \cite{Mods}. The objective of this paper is to synthesize these results and exclude as many additional non-zero $\tau$-values with absolute value less than $100$ as possible given current methods, as well as to explain the limitations of these methods.
In particular, we prove the following:
\begin{theorem}\label{thm: main}
The following are true:
\begin{enumerate}[(i)]
    \item If $\tau(n) \in \{2,-24, -70,-90,92\}$, then $n = p$ is prime.
    \item If $\tau(n) = 2^k < 100$, then $n = p_1\cdots p_k$ for distinct primes $p_i$ with $\tau(p_i) = 2$ for all $i$.
    \item If $\tau(n) =-48$, then $n = p_1p_2$ for primes $p_1,p_2$ with $\tau(p_1)=2$ and $\tau(p_2)=-24$. 
    \item If $\tau(n) = -96$, then $n = p_1p_2p_3$ for distinct primes with $\tau(p_1) = \tau(p_2) = 2$ and $\tau(p_3) = -24$.
    \item For all $\alpha \in \Z$ with $0 < \abs{\alpha} < 100$ not listed in (i)-(iv), we have that $\tau(n) \neq \alpha$ for all $n > 1$.
\end{enumerate}
\end{theorem}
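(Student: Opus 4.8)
The plan is: reduce to prime powers by multiplicativity; eliminate exponents $\ge2$ using Lucas sequences and the curve/Thue computations already in the literature; and then pin down the prime case with \eqref{eq: deltamod2} and the Serre--Swinnerton-Dyer congruences \eqref{eq: congs}, \eqref{eq: pmod}. Write $n=\prod_i p_i^{a_i}$ with the $p_i$ distinct; since $\tau$ is multiplicative, $\alpha=\tau(n)=\prod_i\tau(p_i^{a_i})$, and if any factor were $0$ then $\tau(n)=0\neq\alpha$, so each $\tau(p_i^{a_i})$ is a nonzero divisor of $\alpha$ and $0<\abs{\tau(p_i^{a_i})}<100$. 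The theorem then separates into: (a) no prime power $p^a$ with $a\ge2$ has $0<\abs{\tau(p^a)}<100$; (b) the only $\tau(p)$, $p$ prime, with absolute value in $(0,100)$ are $2,-24,-70,-90,92$; and (c) an elementary count of which products of such values stay below $100$ in absolute value.

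Step (a) carries the weight. For a prime $p$, factor $X^2-\tau(p)X+p^{11}=(X-\alpha_p)(X-\beta_p)$; Deligne's bound $\abs{\tau(p)}\le2p^{11/2}$ is strict (as $2p^{11/2}\notin\Z$), so $(\alpha_p,\beta_p)$ is a Lucas pair with $\abs{\alpha_p}=\abs{\beta_p}=p^{11/2}$ --- nondegenerate provided $\tau(p^j)\neq0$ for $j<a+1$, which we may assume, since otherwise $\tau(n)=0$ --- and its Lucas sequence $u_m=(\alpha_p^m-\beta_p^m)/(\alpha_p-\beta_p)$ satisfies $u_m=\tau(p^{m-1})$ by the Hecke recursion. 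From $u_d\mid u_{a+1}$ for $d\mid a+1$ one descends: if $a+1$ has an odd prime divisor $q<a+1$, or if $a+1$ is a power of $2$ exceeding $4$, then $\abs{\tau(p^a)}\ge\abs{\tau(p^{d-1})}$ for a strictly smaller exponent, so it suffices to treat $a+1=4$ and $a+1$ an odd prime. For $a+1=q$ a sufficiently large odd prime, the Bilu--Hanrot--Voutier primitive divisor theorem (our pairs never lie in its finite exceptional list, since $\alpha_p\beta_p=p^{11}$ is too large) provides a primitive prime $\ell\equiv\pm1\pmod q$ dividing $u_q$, hence $\ell\ge q-1$; since $\abs{u_q}=p^{11(q-1)/2}\abs{\sin(q\theta_p)/\sin\theta_p}$ with $p^{11(q-1)/2}$ enormous, a gap / linear-forms-in-logarithms argument then forces $\abs{\tau(p^a)}>100$. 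The leftover small exponents $a\in\{2,3,4,6\}$ reduce either to finitely many divisor conditions --- e.g.\ for $a=3$, $\tau(p)\mid\alpha$ bounds $\abs{\tau(p)}$, and the factorization $\tau(p^3)=\tau(p)(\tau(p)^2-2p^{11})$ then bounds $p$ --- or to superelliptic and Thue equations such as $y^2=x^{11}+c$ and $v^2=5x^4+4c$ with $0<\abs{c}<100$, which are exactly the type resolved with Magma in \cite{Bennett,BCOT,BOT}; I would finish those there, with a handful of new computations. Granting (a), every $a_i=1$, so $n=p_1\cdots p_k$ is square-free.

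For (b): a prime is not an odd square, so \eqref{eq: deltamod2} gives $2\mid\tau(p)$ for every prime $p$; hence $\alpha=\prod_i\tau(p_i)$ is even, and all \emph{odd} $\alpha$ are already excluded (equivalently, a square-free $n>1$ is not an odd square). Specializing \eqref{eq: congs} and \eqref{eq: pmod} at $n=p$ gives $\tau(p)\bmod3\in\{0,2\}$ (indeed $\tau(p)\equiv1+p\pmod3$ for $p\neq3$), $\tau(p)\bmod5\in\{0,1,2\}$, $\tau(p)\bmod7\in\{0,1,2,4\}$, and $\tau(p)\bmod23\in\{-1,0,2\}$ (the classical description of $\tau$ modulo $23$ via the weight-one form of level $23$); the primes $p\in\{3,5,7,23\}$ themselves have $\abs{\tau(p)}>100$ and so do not arise. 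Sieving every even $c$ with $0<\abs{c}<100$ against these four conditions --- the mod $3$ one kills all $c\equiv1\pmod3$, and the mod $5,7,23$ ones dispose of the rest --- leaves exactly $c\in\{2,-24,-70,-90,92\}$, and the finer congruences modulo higher prime powers remove nothing further.

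Finally (c): each $\tau(p_i)\in\{2,-24,-70,-90,92\}$. Since $70,90,92>50$, any factor equal to $-70$, $-90$, or $92$ forces $k=1$ and $\alpha\in\{-70,-90,92\}$ with $n=p$; and since $24^2=576>100$, at most one factor can be $-24$. Otherwise the multiset of values is $\{2\}^{\,k}$, so $\alpha=2^k$ with $2^k<100$, i.e.\ $1\le k\le6$; or $\{2\}^{\,k-1}\cup\{-24\}$, so $\alpha=-24\cdot2^{\,k-1}$ with $24\cdot2^{\,k-1}<100$, i.e.\ $\alpha\in\{-24,-48,-96\}$. Reading off the forced prime factorization in each case gives (i)--(iv), and every remaining $\alpha$ with $0<\abs{\alpha}<100$ --- every odd one, and every even one outside $T$ --- is excluded, which is (v). The real obstacle is (a): turning the Lucas-sequence and primitive-divisor reduction into a genuinely finite, genuinely computable list of curve and Thue equations, and then carrying out those computations.
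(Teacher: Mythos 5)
Your overall skeleton---reduce to prime powers by multiplicativity, kill the exponents $a\ge 2$, sieve the prime case with the congruences \eqref{eq: pmod}, then do the combinatorics of admissible products---is the same as the paper's, and your steps (b) and (c) essentially reproduce its Sections 3--5. The genuine gap is exactly where you admit it is: step (a). Two specific problems. First, the Lucas-pair machinery you invoke requires $\gcd(\tau(p),p^{11})=1$; you only address nondegeneracy ($\tau(p^j)\ne 0$), not the case $p\mid\tau(p)$, which the paper disposes of separately by noting that $p\mid\tau(p)$ forces $p^a\mid\tau(p^a)$, hence $n=p^a<100$, and $\abs{\tau(n)}>100$ for $3\le n\le 100$. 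Second, and more seriously, after the primitive-divisor analysis bounds $a$ (correctly, via $\ell\mid\alpha$ and $a+1\mid\ell(\ell^2-1)$, giving a finite list of exponents for each $\alpha$), you do not actually eliminate those exponents: the ``gap / linear-forms-in-logarithms argument'' for large $q$ is not a proof (lower-bounding $\abs{\sin(q\theta_p)/\sin\theta_p}$ is precisely the hard part of this subject and is not needed once $q\le 101$), and for the small exponents you defer to ``a handful of new computations'' on superelliptic and Thue equations that you never specify or carry out. The paper's actual mechanism here---and its main methodological point, stated explicitly in the introduction---is that no new curve computations are needed: for odd $\alpha$ the surviving pairs $(\alpha,a)$ are each killed by one of the congruences $\tau(p^a)\equiv\sigma_{11}(p^a)\pmod{691}$, the mod $23$ congruence, or the mod $3^6$ congruence (Table 1), and for even $\alpha$ the parity constraint forces $a+1\in\{2,4\}$, with $a+1=4$ excluded by the explicit identity $\tau(p^3)=2^{k+1}(2^{2k-1}-p^{11})$ being far larger than $100$ in absolute value.

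A smaller inaccuracy: you dismiss the exceptional cases of Bilu--Hanrot--Voutier on the grounds that $\alpha_p\beta_p=p^{11}$ is ``too large,'' but the classification of defective Lucas numbers includes infinite parametrized families, not just a finite sporadic list. The paper's Lemma 2.5 has to check $(\tau(p),p^{11})$ against each family in Table 2 of \cite{BCOT}; several rows are excluded only by parity of $\tau(p)$ or by size estimates on $\tau(p^3)$ and $\tau(p^5)$, not by the size of $p^{11}$ alone. As written, your step (a) is a plan rather than a proof, and it is a plan for the computation-heavy route that this paper was specifically written to avoid.
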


\begin{remark}
Assuming the Lang-Trotter conjecture~\cite{LangTrotter}, there are only finitely many primes $p$ with $\tau(p) = \alpha$ for any integer $\alpha$, in which case Theorem~\ref{thm: main} shows that there are only finitely many $n$ with $0<\abs{\tau(n)}<100$.
\end{remark}

The obstacle to proving that $\abs{\tau(n)} \ge 100$ for all $n>2$ is that the congruences given in \cite{Mods}, which are currently the only known means for ruling out possible values of $\tau(p)$, do not eliminate the exceptional values in Theorem~\ref{thm: main} (i) (whose products give rise to the exceptional values in (ii)-(iv)). New methods will therefore be needed to better understand possible values of $\tau(p)$. Recent progress on the Atkin-Serre conjecture provides hope that $\abs{\tau(p)}$ could be bounded below for all primes. Using an effective form of the Sato-Tate conjecture, Gafni, Thorner, and Wong \cite{Jesse} recently showed that 
\begin{equation}\label{eq: StrongJesse}
2p^{\frac{11}{2}}\,\frac{\log \log p}{\sqrt{\log p}} < \abs{\tau(p)}
\end{equation}
for 100 percent of (but not all) primes $p$. We call the set $S$ of all primes contradicting \eqref{eq: StrongJesse} the \textit{exceptional set}. If $p \notin S \cup \{2,3\}$, then \eqref{eq: StrongJesse} shows that $\abs{\tau(p)}>100$, and hence we immediately obtain the following corollary of Theorem~\ref{thm: main}:

\begin{corollary}\label{thm: dumbAT}
If $n>1$ and $0 < \abs{\tau(n)} < 100$, then every odd prime factor of $n$ is in $S$.
\end{corollary}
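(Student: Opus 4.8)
The plan is to obtain Corollary~\ref{thm: dumbAT} as a purely formal consequence of Theorem~\ref{thm: main} together with the lower bound \eqref{eq: StrongJesse}, with no new input required. First I would fix $n > 1$ with $0 < \abs{\tau(n)} < 100$. By part (v) of Theorem~\ref{thm: main}, $\tau(n)$ cannot avoid \emph{all} the exceptional values, so $\tau(n)$ falls into one of the cases (i)--(iv); in each of these $n$ is square-free with an explicitly prescribed prime factorization. The crucial bookkeeping point is that in every one of (i)--(iv), each prime factor $p$ of $n$ satisfies $\tau(p) \in \{2,-24,-70,-90,92\}$, hence $\abs{\tau(p)} \le 92 < 100$.

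Next I would let $p$ be any odd prime factor of $n$. Since $p$ is odd we get $p \neq 2$ for free, and $p \neq 3$ because $\tau(3) = 252 \notin \{2,-24,-70,-90,92\}$ (equivalently, $\abs{\tau(3)} > 100$, which is incompatible with the previous paragraph). Thus $p \notin \{2,3\}$, i.e. $p \ge 5$.

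Finally, I would argue by contradiction. If $p \notin S$, then by definition of the exceptional set $p$ satisfies \eqref{eq: StrongJesse}, so that $\abs{\tau(p)} > 2p^{11/2}\,(\log\log p)/\sqrt{\log p}$; one checks that this right-hand side already exceeds $100$ at $p = 5$ and is increasing for $p \ge 5$, whence $\abs{\tau(p)} > 100$, contradicting $\abs{\tau(p)} < 100$ from the first step. Therefore $p \in S$, as claimed. I do not expect any genuine obstacle here: all of the substance is packaged inside Theorem~\ref{thm: main}, and the only things to verify are the elementary facts that $\tau(3) = 252$, that the prime factors appearing in cases (i)--(iv) all have $\abs{\tau(p)} < 100$, and that the Sato--Tate--type bound in \eqref{eq: StrongJesse} exceeds $100$ for every prime $p \ge 5$.
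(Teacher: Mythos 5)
Your proposal is correct and follows exactly the route the paper intends: the corollary is stated there as an immediate consequence of Theorem~\ref{thm: main} combined with the observation that any prime $p \notin S \cup \{2,3\}$ satisfies $\abs{\tau(p)} > 100$ by \eqref{eq: StrongJesse}. Your additional bookkeeping (that cases (i)--(iv) force $\abs{\tau(p)} < 100$ for every prime factor, and that $\tau(3) = 252$ rules out $p = 3$) is exactly the implicit content of the paper's one-line derivation.
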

Of course, to prove that $\abs{\tau(n)}\ge 100$ for $n > 2$, we would need a stronger statement, such as an effective version of the Atkin-Serre conjecture for $\tau(n)$ \cite{Atkin-Serre}.

\begin{conj*}[Atkin-Serre $+\,\ep$] For any $\ep> 0$, there are constants $c(\ep), d(\ep) > 0$ such that for all primes $p > d(\ep)$, we have
\begin{equation}\label{eq: AtkinSerre}
\abs{\tau(p)} \ge\, c(\ep)\cdot p^{\frac{9}{2}-\ep}.
\end{equation}
\end{conj*}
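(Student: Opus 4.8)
The final statement is the Atkin--Serre conjecture with an $\ep$ inserted into the exponent, which is an \emph{open} problem; what follows is a proposal for the natural line of attack together with the precise point at which it breaks down. The plan is to reduce \eqref{eq: AtkinSerre} to a quantitative statement about Frobenius angles. By Deligne's theorem the Frobenius at $p$ has characteristic polynomial $X^2-\tau(p)X+p^{11}$, with roots $p^{11/2}e^{\pm i\theta_p}$ for a well-defined angle $\theta_p\in[0,\pi]$, so that $\tau(p)=2p^{11/2}\cos\theta_p$. Thus \eqref{eq: AtkinSerre} is equivalent to the assertion $\abs{\cos\theta_p}\gg_\ep p^{-1-\ep}$: one must show that $\theta_p$ is never anomalously close to $\pi/2$. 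The only free input is integrality --- if $\tau(p)\neq 0$, as predicted by Lehmer's conjecture, then $\abs{\tau(p)}\ge 1$, which excludes the exact value $\theta_p=\pi/2$ but nothing more.

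The main positive ingredient I would use is an effective form of the Sato--Tate law. Since the symmetric power $L$-functions $L(s,\operatorname{Sym}^k\Delta)$ are now known to be automorphic with the expected analytic continuation, the angles $\theta_p$ equidistribute for the Sato--Tate measure $\tfrac{2}{\pi}\sin^2\theta\,d\theta$ with explicit error terms, exactly as in the work of Gafni, Thorner, and Wong behind \eqref{eq: StrongJesse}. Inserting the shrinking window $\abs{\cos\theta_p}\ll p^{-1-\ep}$ into such an estimate shows that the primes violating \eqref{eq: AtkinSerre} form a very sparse, density-zero set; this already delivers the ``$100$ percent of primes'' bound \eqref{eq: StrongJesse}. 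The remaining, and decisive, task is to upgrade ``almost all primes'' to ``all but finitely many primes.''

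The hard part --- and the reason this remains a conjecture rather than a theorem --- is precisely this upgrade. Effective equidistribution bounds how \emph{often} $\theta_p$ lands in a shrinking neighborhood of $\pi/2$, but it cannot rule out a single sporadic prime for which $\tau(p)$ happens to be an unusually small nonzero integer. Eliminating such an anomaly would require a genuine gap principle valid for each individual $p$, whereas integrality supplies only $\abs{\tau(p)}\ge 1$, short of the target $p^{9/2-\ep}$ by a factor of roughly $p^{9/2}$. A natural but so far unsuccessful idea is to exploit the arithmetic of the quadratic order $\Z[p^{11/2}e^{i\theta_p}]$, whose discriminant is $\tau(p)^2-4p^{11}$: a lower bound on $\abs{\tau(p)}$ amounts to keeping $\tau(p)^2$ from collapsing, equivalently keeping this discriminant a controlled distance from its extreme value $-4p^{11}$, which one might hope to force via class-number or CM estimates. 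No argument of this kind currently produces the required bound for every large $p$, and this gap is exactly what separates the available effective Sato--Tate results from the full Atkin--Serre conjecture with the optimal exponent $9/2$.
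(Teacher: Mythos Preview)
You have correctly identified that this statement is a \emph{conjecture}, not a theorem: the paper does not prove it, and indeed explicitly says ``such a statement is currently out of reach.'' The paper's treatment consists solely of stating the conjecture and reporting numerical experiments (Table~1) suggesting plausible values of $c(\ep)$ and $d(\ep)$. There is therefore no proof in the paper to compare your proposal against.

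Your analysis of why the problem is open is sound and in fact goes further than the paper does. The reformulation $\tau(p)=2p^{11/2}\cos\theta_p$ and the reduction to $\abs{\cos\theta_p}\gg_\ep p^{-1-\ep}$ is correct; your observation that effective Sato--Tate (as in \cite{Jesse}) yields only a density-zero exceptional set, while the conjecture demands a statement valid for \emph{every} sufficiently large prime, pinpoints exactly the gap. The integrality input $\abs{\tau(p)}\ge 1$ is indeed all that is unconditionally available for individual primes, and it falls short by the factor you identify. So there is no error in your reasoning --- you have simply, and appropriately, explained why no proof exists rather than supplied one.
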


%\begin{remark}
%Note that this conjecture assumes the truth of Lehmer's conjecture.
%\end{remark}

If one could find an $\ep > 0$ and corresponding constants $c(\ep)$ and $d(\ep)$ satisfying this form of the Atkin-Serre conjecture, then proving that $\abs{\tau(n)} < 100$ if and only if $n \le 2$ would reduce to a finite calculation. Since such a statement is currently out of reach, we instead carry out numerical experiments that suggest an $\ep > 0$ and constants $c(\ep),d(\ep)$ which could conceivably satisfy this condition. In particular, we let $P_n$ denote the set of the first $n$ primes, and $p_{\mathrm{min}}(\ep,n) \in P_n$ denote the prime at which the expression $\abs{\tau(p)}p^{-\frac 92+\ep}$ is minimized.
The values of $p_{\mathrm{min}}(\ep,n)$ and $\min_{p \in P_n} \abs{\tau(p)}p^{-\frac 92+\ep}$ for $n = 30000$ and different small values of $\ep$ are given in the following table:

\begin{table}[H]
    \centering
    \begin{tabular}{|c|c|c|c|c|c|}
    \hline
    $\ep$ & $0.001$ & $0.01$ & $0.1$ & $0.25$ & $0.5$ \\ \hline
    $p_{\mathrm{min}}(\ep, 30000)$ & $43$ & $43$ & $43$ & $2$ & $2$ \\ \hline
    $\min_{p \in P_{30000}}\abs{\tau(p)}p^{-\frac 92+\ep}$ & $0.766 \dots$ & $0.793 \dots$ & $1.112 \dots$ & $1.261 \dots $ & $1.5$\\ \hline
    \end{tabular}
\label{tbl: epexperiments}
\vspace{2mm}
\caption{Numerics for the Atkin-Serre conjecture $+\, \ep$ with the first $30000$ primes}
\end{table}
%\vspace{-0.5cm}

\noindent Assuming, for instance, that $\abs{\tau(p)} \ge 0.793p^{\frac 92-0.01}$ for all primes $p$ (so that $\ep = 0.01$ satisfies the Atkin-Serre conjecture with $c(\ep) = 0.793$, $d(\ep) = 0$), we have $\abs{\tau(n)} < 100$ if and only if $n \le 2$.

\begin{remark}
It would be interesting to carry out further numerics with the goal of finding explicit constants $c(\ep)$ and $d(\ep)$ for which some form of the Atkin-Serre conjecture for arbitrary newforms of integer weight $k \ge 4$ without complex multiplication appears to hold.
\end{remark}

The proof of Theorem~\ref{thm: main} follows the approach of \cite{BCOT} and \cite{BOT} for odd and even values, respectively. For each $\alpha$, we use the fact that smaller values have been excluded as values of the $\tau$-function to show that $\tau(n) = \alpha$ implies $n=p^a$ for some prime $p$. Due to the classification of primitive divisors of Lucas sequences in \cite{BHV, Abouzaid}, we are able to restrict the list of possibilities for $a$, each of which we then eliminate using the congruences for the $\tau$-function listed in \cite{Mods}. (In contrast to previous work, where $\tau(p^a) = \alpha$ for $a>1$ is excluded by solving Diophantine equations, we found these congruences to be sufficient in all cases of interest.)

This paper is organized as follows: In Section~\ref{sec: prelims}, we recall some facts about Lucas numbers and their primitive prime divisors, and connect these to values of the $\tau$-function at prime powers. In Sections~\ref{sec: odd} and \ref{sec: even}, we exclude the odd and even integers indicated in Theorem~\ref{thm: main} as possible values of $\tau(n)$, respectively. Finally, we complete the proof of Theorem~\ref{thm: main} in Section~\ref{sec: proofofmain}. 

\section*{Acknowledgements}
We would like to thank Ken Ono for suggesting and advising this project, and for many valuable comments. We also thank William Craig, Badri Pandey, Wei-Lun Tsai, and the referee for their helpful suggestions. Finally, we are grateful for the generous support of the National Science Foundation (DMS 2002265 and DMS 205118), the National Security Agency (H98230-21-1-0059), the Thomas Jefferson Fund at the University of Virginia, and the Templeton World Charity Foundation. This research was conducted as part of the Number Theory Research Experience for Undergraduates at the University of Virginia.

\section{Preliminaries}\label{sec: prelims}
In this section, we recall some facts about primitive prime divisors of Lucas sequences, as well as several properties of the $\tau$-function, which we will need to prove Theorem~\ref{thm: main}.

\subsection{Lucas sequences}
If $\alpha$ and $\beta$ are algebraic integers such that $\alpha+\beta$ and $\alpha\beta$ are relatively prime non-zero integers and $\frac{\alpha}{\beta}$ is not a root of unity, then we call $(\alpha,\beta)$ a \textit{Lucas pair}, and we define the \textit{Lucas sequence} $u_n(\alpha,\beta) = \{u_1 = 1,\, u_2 = \alpha+\beta, \dots\}$ by 
\begin{equation*}
u_n(\alpha,\beta) := \frac{\alpha^n-\beta^n}{\alpha-\beta}.
\end{equation*}
Lucas sequences have the following divisibility property:
\begin{proposition}[Proposition 2.1 (ii) of \cite{BHV}]\label{prop: relativedivisibility}
    If $d \mid n$, then $u_d(\alpha,\beta) \mid u_n(\alpha,\beta)$.
\end{proposition}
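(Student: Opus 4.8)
The plan is to exploit the algebraic factorization of $\alpha^n - \beta^n$, combined with the fact that every $u_m(\alpha,\beta)$ is an honest rational integer. First I would record the elementary identity
\[
u_n(\alpha,\beta) = \frac{\alpha^n-\beta^n}{\alpha-\beta} = \sum_{k=0}^{n-1}\alpha^k\beta^{n-1-k},
\]
and note that the right-hand side is a symmetric polynomial in $\alpha$ and $\beta$ with integer coefficients. By the fundamental theorem of symmetric polynomials (valid over $\Z$), it is a polynomial with integer coefficients in the elementary symmetric functions $\alpha+\beta$ and $\alpha\beta$, which are integers by the definition of a Lucas pair; hence $u_m(\alpha,\beta) \in \Z$ for all $m$. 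This step is needed only so that the divisibility assertion is a statement in $\Z$.

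Next, writing $n = dm$ with $m \in \Z^+$, I would factor
\[
\alpha^n-\beta^n = (\alpha^d)^m-(\beta^d)^m = (\alpha^d-\beta^d)\sum_{k=0}^{m-1}(\alpha^d)^k(\beta^d)^{m-1-k},
\]
and divide both sides by $\alpha-\beta$ to obtain $u_n(\alpha,\beta) = u_d(\alpha,\beta)\cdot Q$, where $Q := \sum_{k=0}^{m-1}\alpha^{dk}\beta^{d(m-1-k)}$. The factor $Q$ is again a symmetric polynomial in $\alpha$ and $\beta$ with integer coefficients, so the same symmetric-function argument shows $Q \in \Z$. Thus $u_n(\alpha,\beta) = u_d(\alpha,\beta)\,Q$ with $Q \in \Z$ exhibits $u_d(\alpha,\beta)$ as a divisor of $u_n(\alpha,\beta)$, which is the claim.

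A more self-contained variant avoids the symmetric-function input: derive the recurrence $u_{j+1} = (\alpha+\beta)u_j - (\alpha\beta)u_{j-1}$ and the addition formula $u_{a+b} = u_{a+1}u_b - (\alpha\beta)u_a u_{b-1}$ straight from the closed form, and then induct on $m$ in $n = dm$; the case $m=1$ is trivial, and $u_{d(m+1)} = u_{dm+1}u_d - (\alpha\beta)u_{dm}u_{d-1}$ is divisible by $u_d$ once $u_d \mid u_{dm}$ is assumed. Either way there is no real obstacle; the only point deserving a moment's care is the passage from ``algebraic integer symmetric in $\alpha,\beta$'' to ``rational integer,'' which is exactly what the Lucas-pair hypotheses on $\alpha+\beta$ and $\alpha\beta$ are designed to guarantee.
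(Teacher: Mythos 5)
Your argument is correct. Note that the paper itself gives no proof of this statement at all: it is quoted verbatim as Proposition 2.1~(ii) of the cited reference of Bilu, Hanrot, and Voutier, so there is no in-paper argument to compare against. Your first route --- factoring $\alpha^{dm}-\beta^{dm}=(\alpha^d-\beta^d)\sum_{k=0}^{m-1}\alpha^{dk}\beta^{d(m-1-k)}$, dividing by $\alpha-\beta$, and then using the fundamental theorem of symmetric polynomials together with the Lucas-pair hypothesis that $\alpha+\beta,\ \alpha\beta\in\Z$ to see that both $u_d(\alpha,\beta)$ and the cofactor $Q$ are rational integers --- is the standard proof and is complete; you correctly identify that the only point needing care is integrality of $Q$, which is exactly what makes the conclusion a divisibility in $\Z$ rather than merely in the ring of algebraic integers. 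The second variant via the addition formula $u_{a+b}=u_{a+1}u_b-(\alpha\beta)u_au_{b-1}$ and induction on $m$ is also valid and has the minor advantage of staying entirely within integer recurrences. Either version suffices for the way the proposition is used in the paper (to conclude $\tau(p^{b-1})\mid\tau(p^a)$ when $b\mid a+1$).
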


For any prime $\ell$, let $m_\ell(\alpha,\beta)$ denote the smallest $n \ge 2$ such that $\ell \mid u_n(\alpha,\beta)$. Then we have $m_\ell(\alpha,\beta) = 2$ if and only if $\alpha +\beta \equiv 0 \pmod\ell$, and $m_\ell(\alpha,\beta)$ fulfills the following condition:

\begin{proposition}[Proposition 2.3 of \cite{BOT}]\label{prop: possiblea}
    If $\ell$ is an odd prime with $2 < m_\ell(\alpha,\beta) < \infty$, then the following are true:
    \begin{enumerate}[(i)]
        \item If $\ell \mid (\alpha-\beta)^2$, then $m_\ell(\alpha,\beta) = \ell$.
        \item If $\ell \nmid (\alpha-\beta)^2$, then $m_\ell(\alpha,\beta)\mid \ell-1$ or $m_\ell(\alpha,\beta) \mid \ell+1$.
    \end{enumerate}
\end{proposition}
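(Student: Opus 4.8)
The plan is to reduce the Lucas sequence modulo a prime $\mathfrak{l}$ of the ring of integers $\mathcal{O}_{\Q(\alpha)}$ lying over $\ell$ and to translate $\ell \mid u_n(\alpha,\beta)$ into a statement about the multiplicative order of $\overline{\alpha/\beta}$ in $\F_{\ell^2}^\times$. Throughout write $P := \alpha+\beta$, $Q := \alpha\beta$, and $D := (\alpha-\beta)^2 = P^2-4Q$, so that $u_1 = 1$, $u_2 = P$, and $u_n = Pu_{n-1} - Qu_{n-2}$; note that $\alpha,\beta$ are the roots of $x^2-Px+Q$, hence lie in $\mathcal{O}_{\Q(\alpha)}$. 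First I would record that the two hypotheses on $m_\ell(\alpha,\beta)$ rule out the degenerate reductions. If $\ell\mid Q$, then some $\mathfrak{l}\mid\ell$ divides exactly one of $\alpha,\beta$, say $\mathfrak{l}\mid\beta$, $\mathfrak{l}\nmid\alpha$ (as $\gcd(P,Q)=1$ forces $\ell\nmid P$), so $u_n=\sum_{i=0}^{n-1}\alpha^i\beta^{\,n-1-i}\equiv\alpha^{n-1}\not\equiv 0\pmod{\mathfrak{l}}$ for every $n$, contradicting $m_\ell(\alpha,\beta)<\infty$; and $m_\ell(\alpha,\beta)>2$ forces $\ell\nmid u_2=P$. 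Hence $\ell$ divides neither $P$ nor $Q$, so the reduction of $x^2-Px+Q$ mod $\ell$ has nonzero roots.

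For part (i), I would use that when $\ell\mid D$ (and $\ell$ is odd), $x^2-Px+Q$ has a double root $r\equiv 2^{-1}P\pmod{\ell}$, a unit since $\ell\nmid P$, with $P\equiv 2r$ and $Q\equiv r^2\pmod{\ell}$. A one-line induction on $n$ from the recurrence then yields $u_n\equiv n\,r^{n-1}\pmod{\ell}$: the cases $n=1,2$ are immediate, and $n\,r^{n-1}=P(n-1)r^{n-2}-Q(n-2)r^{n-3}$ collapses using $P\equiv 2r$, $Q\equiv r^2$. Since $r$ is invertible, $\ell\mid u_n$ exactly when $\ell\mid n$, so $m_\ell(\alpha,\beta)=\ell$.

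For part (ii), I would note that when $\ell\nmid D$, the polynomial $x^2-Px+Q$ has two distinct nonzero roots $\bar\alpha,\bar\beta\in\F_{\ell^2}$; also $\ell\nmid D=(\alpha-\beta)^2$ gives $\mathfrak{l}\nmid(\alpha-\beta)$, so reducing the closed formula $u_n=(\alpha^n-\beta^n)/(\alpha-\beta)$ modulo $\mathfrak{l}$ yields $\overline{u_n}=(\bar\alpha^{\,n}-\bar\beta^{\,n})/(\bar\alpha-\bar\beta)$. Hence $\ell\mid u_n$ if and only if $\bar\alpha^{\,n}=\bar\beta^{\,n}$, that is, if and only if $e\mid n$, where $e:=\ord(\bar\alpha/\bar\beta)$ satisfies $e\ge 2$ because $\bar\alpha\ne\bar\beta$; consequently $m_\ell(\alpha,\beta)=e$. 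To pin down $e$, I would split on whether $D$ is a square mod $\ell$: if it is, then $\bar\alpha,\bar\beta\in\F_\ell^\times$, so $e\mid\ell-1$; if not, then $\bar\alpha,\bar\beta$ are conjugate over $\F_\ell$, so $\bar\beta=\bar\alpha^{\ell}$ and $(\bar\alpha/\bar\beta)^{\ell+1}=\bar\alpha^{(1-\ell)(\ell+1)}=\bar\alpha^{\,1-\ell^2}=1$, giving $e\mid\ell+1$.

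This has no deep step; it is the standard ``rank of apparition'' computation for Lucas sequences, and once the reduction modulo $\mathfrak{l}$ is set up the rest is bookkeeping. The points that warrant care, and the most likely sources of a slip, are: (a) verifying $\ell\nmid Q$, so that no root degenerates to $0$ and $\overline{\alpha/\beta}$ is a well-defined element of $\F_{\ell^2}^\times$; (b) the clean dichotomy in (ii) between rational and conjugate roots, which is the only place the sharp conclusion $m_\ell\mid\ell\pm1$, rather than merely $m_\ell\mid\ell^2-1$, is extracted; and (c) checking that the image of the integer $u_n$ under $\mathcal{O}_{\Q(\alpha)}\to\mathcal{O}_{\Q(\alpha)}/\mathfrak{l}$ really equals $(\bar\alpha^{\,n}-\bar\beta^{\,n})/(\bar\alpha-\bar\beta)$, which is exactly where $\mathfrak{l}\nmid(\alpha-\beta)$ is used.
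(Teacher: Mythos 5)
Your proof is correct. Note that the paper itself does not prove this proposition --- it imports it verbatim as Proposition 2.3 of \cite{BOT} --- so there is no in-paper argument to compare against; what you have written is a complete, self-contained version of the standard ``rank of apparition'' computation (reduce modulo a prime $\mathfrak{l}$ above $\ell$, get $u_n \equiv n r^{n-1}$ in the ramified/double-root case and $m_\ell = \ord(\bar\alpha/\bar\beta)$ with $\bar\alpha/\bar\beta \in \F_{\ell^2}^\times$ otherwise), and all the delicate points you flag --- $\ell \nmid Q$, $\mathfrak{l} \nmid (\alpha-\beta)$, and the residue/non-residue dichotomy giving $m_\ell \mid \ell \mp 1$ rather than just $m_\ell \mid \ell^2-1$ --- are handled correctly.
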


If $(\alpha,\beta)$ is a Lucas pair, then a prime number $p$ is called a \textit{primitive prime divisor} of $u_n(\alpha,\beta)$ if $p \mid u_n(\alpha,\beta)$, but $p \nmid (\alpha-\beta)^2u_1(\alpha,\beta)\cdots u_{n-1}(\alpha,\beta)$.  If $n > 2$ and $u_n(\alpha,\beta)$ does not have a primitive prime divisor, then $u_n(\alpha,\beta)$ is called \textit{defective}. Bilu, Hanrot, and Voutier \cite{BHV} showed that every Lucas number $u_n(\alpha,\beta)$ with $n > 30$ has a primitive prime divisor. Their work, together with a subsequent paper of Abouzaid \cite{Abouzaid}, completely classifies defective Lucas numbers. 

\subsection{Properties of $\texorpdfstring{\tau(n)}{tau(n)}$}
Important properties of the $\tau$-function include the Hecke multiplicativity established by Mordell \cite{Mordell}, and a deep theorem of Deligne \cite{Deligne1, Deligne2} that gives an upper bound for $\abs{\tau(p)}$.
\begin{theorem}\label{thm: properties}
The following are true:
\begin{enumerate}[(i)]
    \item If $\gcd(m,n) = 1$, then $\tau(mn) = \tau(m)\tau(n)$.
    \item If $p$ is prime and $a \ge 2$, then 
    \begin{equation*}
    \tau(p^a) = \tau(p)\tau(p^{a-1})-p^{11}\tau(p^{a-2}).
    \end{equation*}
    \item If $p$ is prime and $\alpha_p, \beta_p$ are roots of $F_p(X) := X^2-\tau(p)X+p^{11},$ then 
    $$\tau(p^a) = u_{a+1}(\alpha_p,\beta_p) = \frac{\alpha_p^{a+1}-\beta_p^{a+1}}{\alpha_p-\beta_p}.$$
    Moreover, $\abs{\tau(p)} \le 2p^{\frac{11}{2}}$, and $\alpha_p$ and $\beta_p$ are complex conjugates.
\end{enumerate}
\end{theorem}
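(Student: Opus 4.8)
The plan is to treat (i) and (ii) as the classical Hecke relations and (iii) as a short consequence of (ii) together with Deligne's bound, quoting the deep inputs rather than reproving them. For (i) and (ii) I would recall that the space $S_{12}(\SL_2(\Z))$ of weight-$12$ cusp forms is one-dimensional, spanned by $\Delta$, so that $\Delta$ is automatically a simultaneous eigenform for every Hecke operator $T_n$; comparing the first Fourier coefficient of $T_n\Delta = \lambda_n\Delta$ with that of $\Delta$ identifies the eigenvalue as $\lambda_n = \tau(n)$ (using $\tau(1) = 1$). The abstract multiplicativity relations in the Hecke algebra, namely $T_mT_n = T_{mn}$ for $\gcd(m,n)=1$ and $T_pT_{p^a} = T_{p^{a+1}} + p^{11}T_{p^{a-1}}$ (the exponent being $k-1 = 11$ in weight $k = 12$), then pass to eigenvalues and give exactly (i) and (ii). This is Mordell's theorem \cite{Mordell}, which I would simply cite.

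For (iii) I would feed (ii) into the elementary theory of second-order linear recurrences. Setting $v_a := \tau(p^a)$, part (ii) together with $v_0 = \tau(1) = 1$ and $v_1 = \tau(p)$ shows that $(v_a)_{a \ge 0}$ satisfies the recurrence with characteristic polynomial $F_p(X) = X^2 - \tau(p)X + p^{11}$, whose roots $\alpha_p,\beta_p$ satisfy $\alpha_p + \beta_p = \tau(p)$ and $\alpha_p\beta_p = p^{11}$. These roots are distinct, since equality would force the discriminant $\tau(p)^2 - 4p^{11}$ to vanish, i.e. $p^{11}$ to be a perfect square, which is impossible. Hence $v_a = A\alpha_p^a + B\beta_p^a$ for constants $A,B$, and solving $A + B = 1$, $A\alpha_p + B\beta_p = \alpha_p + \beta_p$ gives $A = \alpha_p/(\alpha_p - \beta_p)$ and $B = -\beta_p/(\alpha_p - \beta_p)$, so that
\begin{equation*}
\tau(p^a) = \frac{\alpha_p^{a+1} - \beta_p^{a+1}}{\alpha_p - \beta_p} = u_{a+1}(\alpha_p,\beta_p).
\end{equation*}
I would also note that $\alpha_p + \beta_p$ and $\alpha_p\beta_p$ are coprime integers precisely when $p \nmid \tau(p)$, so that $(\alpha_p,\beta_p)$ genuinely forms a Lucas pair in those cases (the primes relevant to Theorem~\ref{thm: main} satisfy this).

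The remaining assertions of (iii) are where the one genuinely deep ingredient enters: Deligne's proof of the Ramanujan–Petersson conjecture \cite{Deligne1, Deligne2}, giving $\abs{\tau(p)} \le 2p^{11/2}$, which I would take as a black box. Its relevance here is that this inequality is equivalent to $\tau(p)^2 - 4p^{11} \le 0$, i.e. to $F_p(X)$ having non-positive — and, by the previous paragraph, strictly negative — discriminant; since $F_p \in \R[X]$, its two roots then form a complex-conjugate pair $\alpha_p = \overline{\beta_p}$, and in particular $\abs{\alpha_p} = \abs{\beta_p} = \sqrt{\alpha_p\beta_p} = p^{11/2}$, so $\alpha_p/\beta_p$ lies on the unit circle. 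Thus the only real obstacle is Deligne's theorem; granting it, the statement is an assembly of that bound with Mordell's relations and a routine recurrence computation, and I would keep the exposition at that level.
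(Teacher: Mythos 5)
Your proposal is correct: the paper itself offers no proof of this theorem, simply attributing (i) and (ii) to Mordell \cite{Mordell} and the bound in (iii) to Deligne \cite{Deligne1, Deligne2}, and your derivation of the closed form in (iii) from the recurrence in (ii) (distinct roots since $p^{11}$ is not a square, then the standard initial-value computation) together with the observation that Deligne's bound forces the discriminant of $F_p$ to be strictly negative is exactly the standard argument. Your aside that $(\alpha_p,\beta_p)$ is a genuine Lucas pair only when $p \nmid \tau(p)$ correctly anticipates the restriction the paper imposes in Section~\ref{sec: prelims}, so nothing is missing.
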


\noindent In particular, Theorem~\ref{thm: properties} (i) has the following corollary, which we use to prove Theorem~\ref{thm: main}:

\begin{corollary}\label{lem: npaodd}
    Suppose $\alpha \in \Z$ is such that for any factorization $\alpha = \beta \gamma$, we have either $\tau(n) \ne \beta$ or $\tau(n) \ne \gamma$ for $n>1$. If $\tau(n) = \alpha$, then $n=p^a$ for some prime $p$.
\end{corollary}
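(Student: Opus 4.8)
The plan is to deduce this directly from the Hecke multiplicativity of $\tau$ recorded in Theorem~\ref{thm: properties}~(i). Suppose $\tau(n) = \alpha$ with $n > 1$, and write the prime factorization as $n = p_1^{a_1}\cdots p_k^{a_k}$ with the $p_i$ distinct primes and each $a_i \ge 1$. If $k = 1$ then $n = p_1^{a_1}$ is already a prime power and we are done, so the only thing to rule out is $k \ge 2$.

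Assume $k \ge 2$ and set $m := p_1^{a_1}$ and $m' := n/m = p_2^{a_2}\cdots p_k^{a_k}$. Then $m > 1$, $m' > 1$, and $\gcd(m,m') = 1$, so by Theorem~\ref{thm: properties}~(i) we have
\[
\alpha = \tau(n) = \tau(m)\,\tau(m').
\]
Putting $\beta := \tau(m)$ and $\gamma := \tau(m')$, this is a factorization $\alpha = \beta\gamma$ in which $\beta$ is attained as $\tau(m)$ with $m > 1$ and $\gamma$ is attained as $\tau(m')$ with $m' > 1$; that is, both factors are values of the $\tau$-function at arguments exceeding $1$. This contradicts the hypothesis, which says precisely that for every factorization $\alpha = \beta\gamma$ at least one of $\beta, \gamma$ is \emph{not} of the form $\tau(n)$ for any $n > 1$. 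Hence $k = 1$, and $n = p^a$ for some prime $p$.

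There is essentially no obstacle here — the statement is an immediate consequence of multiplicativity combined with the fact that a composite-supported integer can be split into two coprime cofactors that are each genuinely larger than $1$. The only minor point worth noting is that when $\alpha \ne 0$ (the case of interest in Theorem~\ref{thm: main}), the factors $\beta$ and $\gamma$ produced above are automatically nonzero, since their product equals $\alpha$, so one never needs to worry about degenerate factorizations; and trivial factorizations $\alpha = (\pm 1)\cdot(\pm\alpha)$ cause no trouble either, since $\tau(n) = \pm 1$ has no solutions with $n > 1$.
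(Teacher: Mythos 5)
Your argument is correct and is exactly the intended one: the paper states this as an immediate corollary of the Hecke multiplicativity in Theorem~\ref{thm: properties}~(i) without writing out a proof, and the coprime splitting $n = p_1^{a_1}\cdot (n/p_1^{a_1})$ with both factors exceeding $1$ is precisely the implicit argument. Your closing remarks about $\pm 1$ and nonzero factors concern verifying the hypothesis for particular $\alpha$ rather than the corollary itself, but they do no harm.
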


If $p \mid \tau(p)$, then $p^a \mid \tau(p^a)$ for all $a \in \Z^+$ by the recursion in Theorem~\ref{thm: properties} (ii). Since $\abs{\tau(n)} > 100$ for $3 \le n \le 100$, we have that $\alpha \neq \tau(p^a)$ for any $p$ such that $p \mid \tau(p)$ when $0 < \abs{\alpha} < 100$ (except for $\tau(2) = -24$).
Therefore, for the purposes of this paper, it suffices to assume that $p \nmid \tau(p)$, in which case $\{1,\tau(p), \tau(p^2),\dots\}$ forms a Lucas sequence by Theorem~\ref{thm: properties} (iii).

\begin{lemma}\label{lemma: defectivity}
   If $p \nmid \tau(p)$ and $\abs{\tau(p^a)} < 100$, then $\tau(p^a)$ is not a defective term in the Lucas sequence $\{1,\tau(p),\tau(p^2),\dots\}$.
\end{lemma}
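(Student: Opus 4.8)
The plan is to recast the statement in terms of primitive prime divisors of Lucas sequences and then invoke the classification of defective Lucas numbers due to Bilu--Hanrot--Voutier~\cite{BHV} and Abouzaid~\cite{Abouzaid}. As observed just above, when $p\nmid\tau(p)$ the sequence $\{1,\tau(p),\tau(p^2),\dots\}$ is the Lucas sequence $u_\bullet(\alpha_p,\beta_p)$, with $\alpha_p,\beta_p$ the complex-conjugate roots of $X^2-\tau(p)X+p^{11}$; thus $\alpha_p\beta_p=p^{11}$, $\alpha_p+\beta_p=\tau(p)$, and the hypothesis $p\nmid\tau(p)$ says exactly that $\gcd(\tau(p),p^{11})=1$. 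Set $n:=a+1$; there is nothing to prove unless $n\ge3$ (a term $u_n$ with $n\le2$ is by convention never called defective), so we may assume $n\ge3$, $0<\abs{u_n(\alpha_p,\beta_p)}<100$, and, for contradiction, that $u_n(\alpha_p,\beta_p)$ is defective. Note first that $p=2$ is ruled out, since $2\mid\tau(2)=-24$; hence $p\ge3$ and $p^{11}\ge3^{11}$. By the main theorem of~\cite{BHV} a defective term forces $n\le30$, and then \cite{BHV,Abouzaid} leave only $n\in\{3,4,5,6,7,8,10,12,13,18,30\}$; I would dispose of these in a handful of cases.

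\emph{When $n$ is prime} (this handles $n\in\{3,5,7,13\}$). I claim every prime $\ell\mid u_n(\alpha_p,\beta_p)$ must equal $n$. Indeed $\ell\ne p$: reducing $u_n(\alpha_p,\beta_p)=\tfrac{\alpha_p^n-\beta_p^n}{\alpha_p-\beta_p}$ modulo $p$, where $\{\alpha_p,\beta_p\}$ becomes $\{0,\tau(p)\}$, gives $u_n(\alpha_p,\beta_p)\equiv\tau(p)^{n-1}\not\equiv0$; hence $\ell\nmid p^{11}$ and the rank of apparition $m_\ell(\alpha_p,\beta_p)$ divides $n$. Since $u_n(\alpha_p,\beta_p)$ is defective, $\ell$ divides $(\alpha_p-\beta_p)^2u_1(\alpha_p,\beta_p)\cdots u_{n-1}(\alpha_p,\beta_p)$; it cannot divide a proper earlier term, as that would make $m_\ell(\alpha_p,\beta_p)$ a divisor of the prime $n$ smaller than $n$, hence $1$, hence $\ell\mid u_1(\alpha_p,\beta_p)=1$. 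So $\ell\mid(\alpha_p-\beta_p)^2$; since $n\ge3$ is odd and $p\ge3$, $p^{n-1}$ is an odd square, so $u_n(\alpha_p,\beta_p)=\tau(p^{n-1})$ is odd by~\eqref{eq: deltamod2} and $\ell$ is odd, and $\gcd(\tau(p),p^{11})=1$ forces $m_\ell(\alpha_p,\beta_p)>2$, so Proposition~\ref{prop: possiblea}(i) gives $m_\ell(\alpha_p,\beta_p)=\ell$ and therefore $\ell\mid n$, i.e.\ $\ell=n$. Thus $u_n(\alpha_p,\beta_p)=\pm n^j$ for some $j\ge0$; since $\abs{u_n(\alpha_p,\beta_p)}<100$ this is $\pm1$ or $\pm n^j$ with $n$ an odd prime below $100$, and in either case $\tau(p^{n-1})$ would be a value already excluded in \cite{LR,BCOT,DJ,Bennett} --- contradiction.

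\emph{When $n$ is composite} I would use the cyclotomic factorization $u_n(\alpha_p,\beta_p)=\prod_{1<d\mid n}\Phi_d(\alpha_p,\beta_p)$, in which each factor is a nonzero integer dividing $u_n(\alpha_p,\beta_p)$, hence of absolute value $<100$, together with the identities $\Phi_2(\alpha_p,\beta_p)=\tau(p)$, $\Phi_3(\alpha_p,\beta_p)=\tau(p)^2-p^{11}$, $\Phi_4(\alpha_p,\beta_p)=\tau(p)^2-2p^{11}$, $\Phi_6(\alpha_p,\beta_p)=\tau(p)^2-3p^{11}$, and $u_5(\alpha_p,\beta_p)=\Phi_5(\alpha_p,\beta_p)=\tau(p)^4-3\tau(p)^2p^{11}+p^{22}$. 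If $6\mid n$ (so $n\in\{6,12,18,30\}$), then $\abs{\Phi_3(\alpha_p,\beta_p)}$ and $\abs{\Phi_6(\alpha_p,\beta_p)}$ are both $<100$, whence $2p^{11}=\abs{\Phi_3(\alpha_p,\beta_p)-\Phi_6(\alpha_p,\beta_p)}<200$ --- impossible. If $4\mid n$ (so $n\in\{4,8,12\}$), then $\abs{\tau(p)}<100$ and $\abs{\tau(p)^2-2p^{11}}<100$, so $2p^{11}<\tau(p)^2+100<10^4+100$, forcing $p^{11}<5050$ --- impossible for $p\ge3$. The one remaining index is $n=10$: since $u_5(\alpha_p,\beta_p)$ and $u_2(\alpha_p,\beta_p)=\tau(p)$ both divide $u_{10}(\alpha_p,\beta_p)$ (Proposition~\ref{prop: relativedivisibility}), we get $\abs{u_5(\alpha_p,\beta_p)}<100$ and $\abs{\tau(p)}<100$, yet $\abs{u_5(\alpha_p,\beta_p)}\ge p^{22}-\tau(p)^4-3\tau(p)^2p^{11}\ge3^{22}-10^8-3\cdot10^4\cdot3^{11}>100$ --- a contradiction. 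This exhausts all cases.

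The step I expect to be the real crux is establishing that it suffices to treat these finitely many, tractable values of $n$: Deligne's bound $\abs{\tau(p)}\le2p^{11/2}$ does not by itself bound $\abs{u_n(\alpha_p,\beta_p)}=p^{11(n-1)/2}\,\abs{\sin(n\theta_p)/\sin\theta_p}$ from below (this quantity can be tiny when the Sato--Tate angle $\theta_p$ is close to a rational multiple of $\pi$), so the classification of defective Lucas numbers and Proposition~\ref{prop: possiblea} are doing genuine work, not bookkeeping. A secondary subtlety is that for $n\in\{3,4,6\}$ the reference~\cite{BHV} describes infinite parametric families rather than a finite table; but those three indices are exactly the ones already disposed of above --- $n=3$ by the prime-$n$ argument (which shows $\tau(p^2)=\pm3^j$, excluded by \cite{BCOT,DJ,Bennett}), and $n\in\{4,6\}$ by the elementary estimates on the cyclotomic factors $\Phi_d(\alpha_p,\beta_p)$.
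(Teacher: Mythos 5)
Your argument is correct, but it reaches the conclusion by a genuinely different route than the paper. The paper's proof works entirely from the explicit classification of defective Lucas numbers as tabulated in \cite{BCOT}: it identifies $(A,B)=(\tau(p),p^{11})$, discards the sporadic examples of Table 1 because no listed $B$ is an eleventh power, and then walks through the seven parametrized families of Table 2 one row at a time (rows 1--2 via the exclusion of $\pm1,\pm3^r$ as $\tau$-values, rows 3, 5, 7 via the parity of $\tau(p)$, and rows 4 and 6 via explicit size estimates on $\tau(p^3)$ and $\tau(p^5)$). You instead use the classification only for the coarse fact that a defective index must lie in $\{3,4,5,6,7,8,10,12,13,18,30\}$, and then treat each index directly: for prime $n$ a rank-of-apparition argument pins every prime divisor of $u_n$ to equal $n$, so $\tau(p^{n-1})=\pm n^j$ is excluded by \cite{LR,BCOT,Bennett}; for composite $n$ the homogenized cyclotomic factors $\Phi_d(\alpha_p,\beta_p)$ give integer divisors of $u_n$ whose sizes are incompatible with $p^{11}\ge 3^{11}$. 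Your version buys independence from the fine structure of the tables (you never need to inspect individual entries or check which $B$'s are eleventh powers), at the cost of importing two standard facts the paper never states --- the law of apparition ($\ell\mid u_m$ iff $m_\ell\mid m$ for $\ell\nmid\alpha\beta$) and the cyclotomic factorization of $u_n$ --- whereas the paper's table-chase is shorter given the references. One small repair: your assertion that ``$\gcd(\tau(p),p^{11})=1$ forces $m_\ell>2$'' is not the right reason ($m_\ell=2$ iff $\ell\mid\tau(p)$, which has nothing to do with that gcd); the correct justification is the one you had already established a sentence earlier, namely that $\ell$ divides no $u_m$ with $2\le m\le n-1$, so this is a misattribution rather than a gap.
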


\begin{proof}
    A complete classification of the defective Lucas numbers is given in Tables 1 (sporadic examples) and 2 (parametrized families) of \cite{BCOT}. For the particular Lucas sequence $u_n = \tau(p^{n-1})$, the variables $(A,B)$ given in these tables correspond to $(\tau(p),p^{11})$. Since none of the values of $B$ listed in Table 1 are $11$th powers, the sporadic examples listed here do not apply to the Lucas sequence $u_n$.
    Now, by the results of Lygeros and Rozier \cite{LR} as well as Bennett, Gherga, Patel, and Siksek \cite{Bennett} stated in Section~\ref{sec: intro}, we have that $\tau(p^a) \neq -1, \pm 3^r$ as in rows 1 and 2 of Table 2. Rows 3, 5, and 7 give rise to a defective term only if $m = \tau(p)$ is odd; however, by \eqref{eq: deltamod2} we know that $\tau(p)$ is even for all primes. If row 4 of Table 2 gives rise to a defective term $u_4 = \tau(p^3)$, then $(p,\pm m) = (p,\pm \tau(p)) \in B_{3,k}^\pm$ (as defined on p.~20 of \cite{BCOT}),
    meaning that $\tau(p)^2 = 2p^{11}\pm 2$. Using Theorem~\ref{thm: properties} (ii), we see that then
    \begin{align*}
        \abs{\tau(p^3)} = \abs{\tau(p)(\tau(p)^2-2p^{11})} = 2 \abs{\tau(p)} = 2 (2p^{11} \pm 2)^{\frac{1}{2}} > 100.
    \end{align*}
    Finally, row 6 yields a defective term $u_6$ only if $\abs{\tau(p)} \geq 6$. However, if this is the case, then
    $$\abs{\tau(p^5)} = \abs{u_6} = \abs{\tau(p)} (2\tau(p)^2\pm 3) > 100.$$
\end{proof}

\section{Odd Values of \texorpdfstring{$\tau(n)$}{tau(n)}}\label{sec: odd}
The goal of this section is to prove the following theorem concerning odd inadmissible values for the $\tau$-function:
\begin{theorem}\label{thm: mainodd}
If $\alpha$ is any odd integer with $\abs{\alpha} < 100$, then $\tau(n) \neq \alpha$ for any $n > 1$.
\end{theorem}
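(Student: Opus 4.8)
The plan is to reduce, for each odd $\alpha$ with $|\alpha|<100$, the equation $\tau(n)=\alpha$ to a short list of prime-power cases $n=p^a$, and then to kill each case using the Lucas-sequence machinery of Section~\ref{sec: prelims} together with the mod-$\ell$ congruences for $\tau$ from \cite{Mods}. The first observation is that \eqref{eq: deltamod2} forces $\tau(n)$ to be odd only when $n$ is an odd square, so in particular $\tau(n)=\alpha$ odd already implies $n=m^2$; combined with Hecke multiplicativity (Theorem~\ref{thm: properties}(i)) and the fact that all odd integers of absolute value below $100$ that are \emph{not} prime powers factor as a product of two smaller odd integers, each of which has already been excluded as a $\tau$-value by the results quoted in Section~\ref{sec: intro} (\cite{LR,BCO,BCOT,HM,DJ,Bennett}), Corollary~\ref{lem: npaodd} gives $n=p^a$. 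Since $\tau(p^a)$ is odd, $p$ is odd, and moreover $\tau(p)$ is even by \eqref{eq: deltamod2}, so $a$ must be even (the recursion in Theorem~\ref{thm: properties}(ii) shows $\tau(p^a)\equiv \tau(p^{a-2})\pmod 2$, hence $\tau(p^a)$ odd $\iff a$ even). Also $p\nmid\tau(p)$: otherwise $p^a\mid\tau(p^a)$ and $|\tau(p^a)|<100$ would force $p^a<100$, a case one checks directly from $\Delta$'s expansion.

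Next I would invoke Theorem~\ref{thm: properties}(iii): with $p\nmid\tau(p)$, the sequence $u_{k}:=\tau(p^{k-1})$ is a genuine Lucas sequence with $(\alpha_p,\beta_p)$ the roots of $X^2-\tau(p)X+p^{11}$, and $\tau(p^a)=u_{a+1}$. By Lemma~\ref{lemma: defectivity}, since $|\tau(p^a)|<100$, the term $u_{a+1}$ is \emph{not} defective, so it has a primitive prime divisor $\ell$. Primitivity means $\ell\nmid (\alpha_p-\beta_p)^2=\tau(p)^2-4p^{11}$ and $\ell\mid u_{a+1}$ but $\ell\nmid u_k$ for $k<a+1$, so $m_\ell(\alpha_p,\beta_p)=a+1$. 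Now $\ell\mid u_{a+1}=\tau(p^a)=\alpha$, so $\ell$ is an odd prime divisor of $\alpha$, hence $\ell<100$; and $a+1\ge 3$ (as $a\ge 2$ is even), so $2<m_\ell<\infty$ and Proposition~\ref{prop: possiblea} applies. Since $\ell\nmid(\alpha_p-\beta_p)^2$, case (i) is excluded and we get $a+1\mid \ell-1$ or $a+1\mid\ell+1$. This leaves only finitely many pairs $(\ell,a)$ — for each odd prime $\ell<100$ dividing some admissible $\alpha$, the exponent $a+1$ divides $\ell\pm1$ — and one also has the crude bound $|\tau(p^a)|=|u_{a+1}|\gtrsim p^{11(a-1)/2}$ (which is immediate from $|\alpha_p|=|\beta_p|=p^{11/2}$ and $|\alpha_p-\beta_p|\le 2p^{11/2}$, giving $|u_{a+1}|\ge (p^{11(a+1)/2}-\,\cdots)/(2p^{11/2})$), so $|\alpha|<100$ forces $p$ and $a$ both small. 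Concretely one is reduced to $a\in\{2,4,\dots\}$ with $a+1$ an odd prime or a small divisor of $\ell\pm1$, and only the smallest primes $p=3,5,7,\dots$ can occur.

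For each surviving triple $(\alpha,p,a)$ I would derive a contradiction from the explicit congruences \eqref{eq: congs}, \eqref{eq: pmod} for $\tau$ modulo $2^{11},3^6,5^3,7,23,691$ (and the mod-$\ell$ relations of Serre–Swinnerton-Dyer \cite{Mods}): the value $\tau(p^a)$ is a fixed polynomial in $\tau(p)$ and $p^{11}$, and reducing modulo the relevant small modulus constrains $\tau(p)\bmod m$ so tightly that $\tau(p^a)\equiv\alpha$ becomes impossible, or forces $p$ into a residue class incompatible with the congruences. This is exactly the mechanism the authors flag as replacing earlier Diophantine arguments, and it is routine once the finite list is in hand. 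The main obstacle is therefore not any single deep input but the bookkeeping in the previous paragraph: one must correctly enumerate, for every odd $|\alpha|<100$, all prime-power representations $n=p^a$ surviving the Lucas/primitive-divisor constraints — ensuring in particular that the already-excluded smaller odd values and the non-defectivity lemma really do cut the list down to something finite and explicit — before the congruence endgame can be applied uniformly. A secondary subtlety is the handful of $\alpha$ that are themselves odd primes or prime powers (so Corollary~\ref{lem: npaodd} gives no factorization leverage): there one leans entirely on the $a\ge 2$ analysis above, using that $\alpha=\tau(p)$ for $a=1$ is already ruled out by \cite{DJ,Bennett}.
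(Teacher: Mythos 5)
Your overall architecture matches the paper's: reduce to $n=p^a$ via Corollary~\ref{lem: npaodd}, note $p$ odd, $a$ even, $p\nmid\tau(p)$, invoke Lemma~\ref{lemma: defectivity} to produce a primitive prime divisor $\ell$ of $u_{a+1}=\tau(p^a)$ with $m_\ell=a+1$, bound $a$ via Proposition~\ref{prop: possiblea}, and finish with the congruences \eqref{eq: congs}. However, there is one genuinely false step. The ``crude bound'' $\abs{\tau(p^a)}=\abs{u_{a+1}}\gtrsim p^{11(a-1)/2}$ does not hold: by Theorem~\ref{thm: properties}(iii), $\alpha_p$ and $\beta_p$ are \emph{complex conjugates of equal modulus} $p^{11/2}$, so $\alpha_p^{a+1}-\beta_p^{a+1}=2i\,p^{11(a+1)/2}\sin((a+1)\theta_p)$ can be arbitrarily small compared to $p^{11(a+1)/2}$; there is no lower bound of the form $p^{11(a+1)/2}-\cdots$ on the numerator. (If such a bound existed, effective Atkin--Serre-type statements would be trivial, and the whole difficulty the paper discusses in Section~\ref{sec: intro} would evaporate.) Consequently your conclusion that ``only the smallest primes $p=3,5,7,\dots$ can occur,'' and hence that one is reduced to finitely many triples $(\alpha,p,a)$, is unjustified. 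The correct endgame --- and what the paper actually does --- is to fix each pair $(\alpha,a)$ and rule out $\tau(p^a)=\alpha$ for \emph{all} primes $p$ simultaneously, by verifying that $\sum_{i=0}^a x^{11i}\equiv\alpha\pmod{691}$ has no solution $x$, or that no admissible value of $\tau(p)\bmod 23$ (resp.\ the $\bmod\ 3^6$ congruence over all residue classes of $p$) is compatible with $\tau(p^a)\equiv\alpha$, with $p=3$ and $p=23$ handled separately. Your description of the congruence step is close enough to this that the proof can be repaired, but as written it rests on an enumeration of primes $p$ that cannot be carried out.

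Two smaller points. First, your use of primitivity to exclude case (i) of Proposition~\ref{prop: possiblea} (so that $a+1\mid\ell\pm1$ rather than $a+1\mid\ell(\ell^2-1)$) is legitimate by the paper's definition of primitive prime divisor, and is actually slightly sharper than what the paper uses; but note it produces a different explicit list of exponents than Table~\ref{table: congruences} (e.g.\ for $\abs{\alpha}=51$ you would face $a=8$ instead of $a=16$), so the congruence verifications must be redone for your list. Second, you omit the paper's observation that $a+1$ must be \emph{prime} (via Proposition~\ref{prop: relativedivisibility}: a nontrivial divisor $b$ of $a+1$ would force $\tau(p^{b-1})$ to be an already-excluded odd divisor of $\alpha$), which the paper uses to prune the exponent list; your argument remains finite without it, but you should either include it or accept the longer list.
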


\begin{proof}
Using the previous results described in Section~\ref{sec: intro}, it suffices to show that
\begin{equation}\label{eq: oddvals}
    \tau(n) \notin \{\pm 39, \pm 51, \pm 57, \pm 65, \pm 69, \pm 85, \pm 87, \pm 93, \pm 95\}
\end{equation}
for $n \in \Z^+$. 
We note that all the values $\alpha$ in \eqref{eq: oddvals} satisfy the conditions of Corollary~\ref{lem: npaodd} by the results stated in Section~\ref{sec: intro}.
Thus if $\tau(n) = \alpha$, we must have $n = p^a$ for some prime $p$ and $a \ge 1$, where $p$ is odd and $a$ is even by \eqref{eq: deltamod2}. As explained in Section~\ref{sec: prelims}, it suffices to consider the case $p \nmid \tau(p)$, so that the values $\{1, \tau(p), \tau(p^2), \cdots\}$ form a Lucas sequence. By Lemma~\ref{lemma: defectivity}, we have that $\alpha = \tau(p^a)$ is a non-defective term in this sequence, so $\alpha$ has some prime divisor $\ell$ such that $m_{\ell} = a+1$. Hence by Proposition~\ref{prop: possiblea}, we have that $(a+1) \mid \ell(\ell^2-1)$. Moreover, we note that $a+1$ must be prime. Indeed, if $a+1$ had a non-trivial divisor $b$, then by Proposition~\ref{prop: relativedivisibility}, we would have $\tau(p^{b-1}) \mid \tau(p^a)$, but by non-defectivity of $\tau(p^a)$ and the odd $\tau$-values previously ruled out, this is impossible.
Considering each possible primitive prime divisor $\ell$ of $\alpha$ individually, we use these conditions on $a+1$ to obtain a finite list of possible values for $a$ (which is given in Table~\ref{table: congruences} below for each $\alpha$ in \eqref{eq: oddvals}).

To rule out each of these values of $a$,
we use the following well-known congruences \cite{Ramanujan, Mods}:
\begin{align}\label{eq: congs}
\begin{split}
    \tau(n) \equiv \begin{dcases} 
    \sigma_{11}(n) \qquad \quad \pmod{691}, \\
    0 \qquad\, \qquad \quad \pmod{23} &\text{ if } n = p \text{ and } p^{11} \equiv -1 \pmod{23}, \\
    -1, 2 \,\,\, \qquad \quad \pmod{23} &\text{ if } n = p \text{ and } p^{11} \equiv 1 \,\,\,\,\, \pmod{23}, \\
    n^{-610} \sigma_{1231}(n) \pmod{3^6} & \text{ if } n \not\equiv 0 \pmod{3}.
    \end{dcases}
\end{split}
\end{align}

If $n = p^a$, then the mod $691$ congruence in \eqref{eq: congs} gives
$$ \tau(p^a) \equiv \sigma_{11}(p^a) = \sum_{i=0}^a p^{11i} \pmod{691}.$$
In particular, if the equation $\sum_{i=0}^a x^i \equiv \alpha \pmod{691}$ has no solutions, then $\alpha \ne \tau(p^a)$ for any prime $p$.
Checking this for each $(\alpha,a)$ in Table~\ref{table: congruences}, we eliminate the pairs indicated in the table.

By the mod 23 congruence, we have $\tau(p) \equiv 0, -1, 2 \pmod{23}$ for all $p \ne 23$, where $\tau(p) \equiv 0 \pmod{23}$ exactly when $p^{11} \equiv -1 \pmod{23}$.
Combining this with the recurrence in Theorem~\ref{thm: properties}~(ii), we can find a list of possible values for $\tau(p^a)$ mod 23, as long as $p \ne 23$. Checking whether $\alpha$ is on this list for each pair in Table~\ref{table: congruences} not already eliminated by the mod 691 congruence, and then checking numerically that $|\tau(23^a)|>10^{14} > \alpha$ for each $a$ in the table, we eliminate the pairs shown in Table~\ref{table: congruences}.

Finally, if for all possible equivalence classes of $3 \nmid p \pmod{3^6}$ we have $p^{-610a} \sigma_{1231}(p^a) \not\equiv \alpha \pmod{3^6}$, then $\alpha \ne \tau(p^a)$ for $p \ne 3$. Furthermore, for all $\alpha$ in the table and any $a \ge 1$, we have $\alpha \not\equiv \tau(3^a) \equiv 0 \pmod{9}$.
This eliminates all remaining pairs $(\alpha, a)$ in Table~\ref{table: congruences}.

\begin{table}[H]
\begin{adjustbox}{width=\columnwidth,center}
    \centering
    \begin{tabular}{|c|l|l|l|}
    \hline
     $\abs{\alpha}$ & $a$ & $\mathrm{Congruences} \,\, \mathrm{for} \,\, \tau(p^a) \ne \abs{\alpha}$ & $\mathrm{Congruences} \,\, \mathrm{for} \,\, \tau(p^a) \ne -\abs{\alpha}$\\ \hline
     $39 = 3\cdot 13$ & $2,6,12$ & $2,6,12 \pmod{691}$ & $2,6 \pmod{691}, 12 \pmod{23}$ \\ \hline
     $51 = 3\cdot 17$ & $2,16$ & $2,16 \pmod{23}$ & $2,16 \pmod{23}$ \\ \hline
     $57 = 3\cdot 19$ & $2,4,18$ & $2,18 \pmod{23}, 4 \pmod{691}$ & $2 \pmod{691}, 4,18 \pmod{23}$ \\ \hline
     $65 = 5\cdot 13$ & $2,4,6,12$ & $2,4,6,12 \pmod{691}$ & $2 \pmod{691}, 4,6,12 \pmod{23}$ \\ \hline
     $69 = 3\cdot 23$ & $2,10,22$ & $2,22 \pmod{3^6}, 10 \pmod{23}$ & $2,22 \pmod{3^6}, 10 \pmod{691}$ \\ \hline
     $85 = 5\cdot 17$ & $2,4,16$ & $2, 4, 16 \pmod{691}$ & $2 \pmod{691}, 4, 16 \pmod{23}$ \\ \hline
     $87 = 3\cdot 29$  & $2,4,6,28$ & $2, 6, 28 \pmod{23}, 4 \pmod{691}$ & $2 \pmod{691}, 4 \pmod{3^6}, 6,28 \pmod{23}$ \\ \hline
     $93 = 3\cdot 31$ & $2,4,30$ & $2 \pmod{691}, 4, 30 \pmod{3^6}$ & $2 \pmod{23}, 4 \pmod{3^6}, 30 \pmod{691}$ \\ \hline
     $95 = 5\cdot 19$  & $2,4,18$ & $2 , 4 , 18 \pmod{691}$ & $2 \pmod{691}, 4, 18 \pmod{23}$ \\ \hline
    \end{tabular}
\end{adjustbox}
\vspace{2mm}
\caption{Congruences showing that $\tau(p^a) \ne \alpha$ for each pair $(\alpha,a)$}\label{table: congruences} 
\end{table}
\vspace{-1cm}
\end{proof}

\section{Even Values of \texorpdfstring{$\tau(n)$}{tau(n)}}\label{sec: even}
The main result of this section is the following theorem concerning even inadmissible values for the $\tau$-function: 
\begin{theorem}\label{thm: maineven}
If $\alpha$ is any even integer with $\abs{\alpha} < 100$ such that $$\alpha \notin\{2^k:\, k \in \Z^+\} \cup \{0,-24,-48,-70,-90,92,-96\},$$ then $\tau(n) \neq \alpha$ for any $n \in \Z^+$.
\end{theorem}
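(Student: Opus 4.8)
The plan is to follow the template of Theorem~\ref{thm: mainodd} and the even-value analysis of \cite{BOT}, arranged as an induction on $\abs{\alpha}$. The new feature compared with the odd case is that $\tau$ takes even values at prime powers, so that $\tau(n)=\alpha$ no longer forces $n$ to be a prime power; the set $T$ will emerge as exactly the set of products, of absolute value below $100$, of the ``irreducible'' surviving values $\{2,-24,-70,-90,92\}$.

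The first step is a finite computation: using the congruences in \eqref{eq: congs} modulo $23$, $691$ and $3^6$ (together with $\abs{\tau(3)},\abs{\tau(23)}>100$ to deal with the excluded primes), I would check that the only even integers $\beta$ with $\abs{\beta}<100$ for which $\tau(p)=\beta$ can hold at a prime $p$ are $\beta\in\{2,-24,-70,-90,92\}$. Combined with Theorem~\ref{thm: mainodd} and the results of \cite{BCOT,Bennett,BOT} recalled in Section~\ref{sec: intro}, this reduces Theorem~\ref{thm: maineven} to ruling out a concrete finite list of even values of $\alpha$.

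Now suppose $\tau(n)=\alpha$ with $n>1$. If $n$ is not a prime power, then any prime power factor $p_i^{a_i}$ with $a_i\ge 2$ would have $\tau(p_i^{a_i})\mid\alpha$ even with $0<\abs{\tau(p_i^{a_i})}<\abs{\alpha}$, which by the inductive hypothesis would force $\tau(p_i^{a_i})$ into $T$ --- impossible, since no value in $T$ is a $\tau$-value at a prime power of exponent $\ge 2$; hence $n=p_1\cdots p_k$ is squarefree. Then $\alpha=\prod_i\tau(p_i)$, each $\tau(p_i)$ even with $\abs{\tau(p_i)}<\abs{\alpha}$, so $\tau(p_i)\in\{2,-24,-70,-90,92\}$ by the base-case list, and an explicit enumeration of the products of this five-element set with absolute value below $100$ gives $\alpha\in\{2^k:2\le k\le 6\}\cup\{-48,-96\}\subseteq T$, each with the multiset $\{\tau(p_i)\}$ uniquely determined; this proves Theorem~\ref{thm: main}(ii)--(iv) and shows no $\alpha\notin T$ arises here. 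If instead $n=p^a$ is a prime power --- which, by Corollary~\ref{lem: npaodd}, is the only remaining possibility once $\alpha\notin T$ --- then when $p\mid\tau(p)$ we have $p^a\mid\alpha$, so $p^a<100$, leaving finitely many $(p,a)$ that I would settle by direct computation (only $\tau(2)=-24$ has absolute value $<100$). When $p\nmid\tau(p)$, the sequence $\{1,\tau(p),\tau(p^2),\dots\}$ is Lucas (Theorem~\ref{thm: properties}(iii)) and $\alpha=\tau(p^a)$ is non-defective by Lemma~\ref{lemma: defectivity}, hence has a primitive prime divisor $\ell$; by Proposition~\ref{prop: relativedivisibility} every proper nontrivial divisor $b$ of $a+1$ gives $\tau(p^{b-1})\mid\alpha$ with $\abs{\tau(p^{b-1})}<\abs{\alpha}$ (strictly, as $\ell\nmid\tau(p^{b-1})$), and for $b-1\ge 2$ the inductive hypothesis forbids this, so $a+1$ is prime or equal to $4$. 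Since $\alpha$ is even, $p^a$ is not an odd square when $p$ is odd (by \eqref{eq: deltamod2}), so $a$ is odd, forcing $a+1\in\{2,4\}$, whence $a=1$ (excluded by the base-case list, as $\alpha\notin\{2,-24,-70,-90,92\}$) or $a=3$; for $a=3$ the identity $\tau(p^3)=\tau(p)(\tau(p)^2-2p^{11})$ with $\tau(p)\mid\alpha$ and $\tau(p)\in\{2,-24,-70,-90,92\}$ forces $p^{11}<5000$, impossible for odd $p$ --- alternatively the congruences \eqref{eq: congs} can be run as in Table~\ref{table: congruences} --- and the case $p=2$ is immediate since $\abs{\tau(2^a)}>100$ for $a\ge 2$.

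The step I expect to be the main obstacle is the prime-power case with $a\ge 2$: making the Lucas-sequence bookkeeping airtight --- verifying non-defectivity through Lemma~\ref{lemma: defectivity}, using a primitive prime divisor of $\tau(p^a)$ to exclude the degenerate case $\tau(p^{b-1})=\pm\alpha$ so that the inductive hypothesis can be applied, and then grinding through the size or congruence estimates for each surviving pair $(\alpha,a)$ (the analogue of Table~\ref{table: congruences}). A secondary but genuine subtlety is the composite case: besides checking that no $\alpha\notin T$ is a nontrivial product of survivors, one must confirm that for each $\alpha\in\{2^k,-48,-96\}$ the multiset of prime $\tau$-values is uniquely determined, so the factorization of $n$ asserted in Theorem~\ref{thm: main}(ii)--(iv) is genuinely forced.
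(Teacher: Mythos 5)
Your overall strategy matches the paper's: induct on $\abs{\alpha}$, use Corollary~\ref{lem: npaodd} together with the previously excluded values and Theorem~\ref{thm: mainodd} to reduce to $n=p^a$, dispose of $p\mid\tau(p)$ via the bound $\abs{\tau(n)}>100$ for $3\le n\le 100$, use Lemma~\ref{lemma: defectivity} and primitive prime divisors to force $a+1$ prime or equal to $4$, hence $a\in\{1,3\}$ since $a$ is odd, kill $a=3$ by the size of $\abs{\tau(p)^2-2p^{11}}$, and finish $a=1$ with congruences. Folding the squarefree/composite analysis of Theorem~\ref{thm: main}(ii)--(iv) into the same induction, rather than deferring it to Section~\ref{sec: proofofmain}, is a legitimate reorganization.

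There is, however, one step that fails as written: your ``first step,'' eliminating $\tau(p)=\beta$ at primes using only the congruences \eqref{eq: congs} modulo $691$, $23$, and $3^6$. That toolkit does not exclude $\beta=48$ or $\beta=68$. Modulo $691$, the congruence $\tau(p)\equiv 1+p^{11}$ imposes no condition beyond $\tau(p)\not\equiv 1$, since $\gcd(11,690)=1$ makes $x\mapsto x^{11}$ a bijection on $(\Z/691\Z)^{\times}$. Modulo $23$ one has $48\equiv 2$ and $68\equiv -1$, both admissible values of $\tau(p)$. And the mod $3^6$ congruence $\tau(p)\equiv p^{-610}+p^{621}$ does not rule them out either: reduced modulo $81$ it reads $\tau(p)\equiv p^{38}+p^{27}$, and $p\equiv 20\pmod{81}$ gives $49+(-1)\equiv 48$ (one can check this persists modulo $243$, e.g.\ at $p\equiv 101$), while $68\equiv 5\pmod 9$ is attained at $p\equiv 7\pmod 9$. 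This is precisely why the paper invokes the further Swinnerton-Dyer congruences from \cite{Mods} recorded in \eqref{eq: pmod}, in particular $\tau(p)\equiv 0,1,2\pmod 5$ and $\tau(p)\equiv 0,1,2,4\pmod 7$ for $p\ne 23$; since $48\equiv 68\equiv 3\pmod 5$, these finish the job. Your argument goes through once you enlarge the congruence toolkit from \eqref{eq: congs} to \eqref{eq: pmod} (together with the check $\abs{\tau(23)}>100$), but the finite computation you describe would not terminate successfully for these two values.
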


\begin{proof}
Given the previous results stated in Section~\ref{sec: intro}, it suffices to show that
\begin{align}\label{eq: eventaulist}
\begin{split}
\tau(n) \notin \{&-2, -4, -8, \pm 12, -16, \pm 20, 24, \pm 28, \pm 30, -32, \pm 36, \pm 40, \pm 42, \pm 44, 48, \\
&\pm 52,\pm 56, \pm 60, -64, \pm 66, \pm 68, 70, \pm 72, \pm 76, \pm 78, \pm 80, \pm 84, \pm 88, 90, -92, 96\},
\end{split}
\end{align}
for all $n \in \Z^+$. We generalize the proof of Theorem 1.1 in \cite{BOT}, and apply the following argument to the values $\alpha$ in \eqref{eq: eventaulist} inductively starting with the $\alpha$ of smallest absolute value (for example, we need to know that $12$ is not an admissible value for the $\tau$-function in order to show that $\tau(n) \neq 24$ for all $n \in \Z^+$). Suppose for the sake of contradiction that for some $\alpha$ in \eqref{eq: eventaulist}, there exists a positive integer $n$ such that $\tau(n) = \alpha$. 
By the induction hypothesis, the results in Section~\ref{sec: intro}, and Theorem~\ref{thm: mainodd},
Corollary~\ref{lem: npaodd} applies, and so it follows that $n = p^a$ for some prime $p$ and some $a \in \Z^+$. As explained in Section~\ref{sec: prelims}, we can assume that $p \nmid \tau(p)$, so in particular $p \ne 2$.

Since $\tau(p^a) = \alpha \equiv 0 \pmod{2}$, it follows by \eqref{eq: deltamod2} that $a$ must be odd. We claim that this implies $a = 1$. Assume otherwise. By Lemma~\ref{lemma: defectivity}, we know $\alpha = \tau(p^a)$ is not a defective term in the Lucas sequence $\{1,\tau(p),\tau(p^2),\dots\}$. Therefore, $\tau(p^a)$ must have a primitive prime divisor $\ell$, which must furthermore be odd since $2 \mid \tau(p)$.
Suppose now that $a+1$ has a non-trivial divisor $b$. Then $\tau(p^{b-1})$ divides $\tau(p^a) = \alpha$ by Proposition~\ref{prop: relativedivisibility}.

Using Theorem~\ref{thm: mainodd}, the results stated in Section~\ref{sec: intro}, and the induction hypothesis, we check that for any odd prime $\ell$ dividing $\alpha$, any factor of $\alpha$ not divisible by $\ell$ which is not a power of 2 is not an admissible value for the $\tau$-function. Thus, we must have $\tau(p^{b-1}) = 2^k$ for some $1 \le k \le 6$.
By Lemma~\ref{lemma: defectivity} and the fact that $2 \mid \tau(p)$, it follows that $b = 2$. We conclude that $2$ is the only possible non-trivial factor of $a+1$, so either $a+1$ is prime or $a+1 = 4$.

If $a+1$ is prime, then since $a$ is odd, we must have $a=1$. On the other hand, if $a+1 = 4$, then $\tau(p) = 2^k$ for some $1 \le k \le 6$, and by the recursive relation in Theorem~\ref{thm: properties} (ii), we then get $\alpha = \tau(p^3) =2^{k+1}(2^{2k-1} - p^{11})$.
However, for any odd prime $p$, we have $\abs{2^{2k-1} - p^{11}} > 175000 > \abs{\alpha}$, so this is impossible. We conclude that $n = p$ for some odd prime $p$. 

Finally, the congruences for $\tau(n)$ given in \cite[p. 4]{Mods} imply that for any prime $p \neq 23$,
\begin{align}\label{eq: pmod}
\begin{split}
    \tau(p) \equiv 0 &\pmod 2, \quad \tau(p) \equiv 0,2 \pmod 3, \quad \tau(p) \equiv 0,1,2 \pmod 5, \\
    \tau(p) &\equiv 0,1,2,4 \pmod 7, \quad \tau(p) \equiv 0,-1,2 \pmod{23}.
\end{split}
\end{align}
However, we check that each $\alpha$ in \eqref{eq: eventaulist} fails to satisfy one of these congruences and $\abs{\tau(23)} > 100$, so we cannot have $\tau(p) = \alpha$.
\end{proof}

\section{Proof of Theorem~\ref{thm: main}}\label{sec: proofofmain}
   Theorems~\ref{thm: mainodd} and \ref{thm: maineven} prove Theorem~\ref{thm: main} (v). Moreover, the argument used in the proof of Theorem~\ref{thm: maineven} shows that if $\tau(n) \in \{2,-24,-70,-90,92\}$, then $n = p$ is prime, which proves (i). To show (ii), we proceed by induction on $k$. From (i), we know that if $\tau(n) = 2$, then $n = p$ is prime. Now, if $\tau(n) = 2^k$ and $n = p_1^{a_1} \cdots p_j^{a_j}$, then since $\tau(m) \notin \{-2^k: 0 \le k \le 6\} \cup \{1\}$ for all $m > 1$ by (v), we must have $\tau(p_i^{a_i}) = 2^{c_i}$ for some $c_i > 0$ such that $c_1 + \cdots + c_j = k$. If $j > 1$, then by the induction hypothesis, we have  $a_i = 1$ and $\tau(p_i) = 2$ for all $i$, so that in particular $j = k$. Hence, it suffices to consider the case $n=p^a$. As in the proof of Theorem~\ref{thm: maineven}, we can assume $p \nmid \tau(p)$, so that $\{1, \tau(p), \tau(p^2),\cdots\}$ forms a Lucas sequence. By Lemma~\ref{lemma: defectivity}, $\tau(p^a) = 2^k$ is not a defective term. Thus since $2\mid \tau(p)$, we must have $a = 1$. However, $\tau(p) \ne 2^k$ by the congruences in \eqref{eq: pmod}, hence we cannot have $n = p^a$.
   
   The proofs of (iii) and (iv) are quite similar: Suppose first that $\tau(n) = -48$. Then given the values ruled out for $\tau(n)$ in (v), we find that either $n=p^a$ or $n = p_1^{a_1} p_2^{a_2}$ for primes $p_1,p_2$ such that $\tau(p_1^{a_1}) = 2$ and $\tau(p_2^{a_2}) = -24$ (in which case $a_1,a_2 = 1$ by part (i)). Therefore, it suffices to exclude the possibility that $\tau(p^a) = -48$. By the argument used in the proof of Theorem~\ref{thm: maineven}, we need only to consider the case $a=1$, at which point $\tau(p) = -48$ is again ruled out by the congruences in \eqref{eq: pmod}. Finally, if $\tau(n) = -96$, then given the values ruled out for $\tau(n)$ in (v) and the cases of $2,4,-24,-48$ handled above, we find that either $n=p^a$ or $n=p_1p_2p_3$ with $\tau(p_1)=\tau(p_2)=2$ and $\tau(p_3)=-24$. As before, to rule out the case $\tau(p^a) = -96$, we note that the proof of Theorem~\ref{thm: maineven} implies that we must have $a=1$, which is impossible by the congruences in \eqref{eq: pmod}. \qed


\begin{thebibliography}{99}

\bibitem{Abouzaid} M. Abouzaid, \emph{Les Nombres de {L}ucas et {L}ehmer sans Diviseur Primitif}, J. Théor. Nombres Bordeaux \textbf{18} (2006), 299-313.

\bibitem{BCO} J. S. Balakrishnan, W. Craig, and K. Ono, \emph{Variants of {L}ehmer's Conjecture for {R}amanujan's Tau-Function}, \url{https://arxiv.org/abs/2005.10345}, 2020.

\bibitem{BCOT} J. S. Balakrishnan, W. Craig, K. Ono, and W.-L. Tsai,  \emph{Variants of {L}ehmer's Speculation for Newforms}, \url{https://arxiv.org/abs/2005.10354}, 2020.

\bibitem{BOT} J. S. Balakrishnan, K. Ono, and W.-L. Tsai,  \emph{Even Values of {R}amanujan's Tau-Function}, \url{https://arxiv.org/abs/2102.00111}, 2021.

\bibitem{Bennett} M. Bennett, A. Gherga, V. Patel, and S. Siksek, \emph{Odd Values of the {R}amanujan Tau Function}, \url{https://arxiv.org/abs/2101.02933}, 2021.

\bibitem{BHV} Y. Bilu, G. Hanrot and P. M. Voutier, \emph{Existence of Primitive Divisors of {L}ucas and {L}ehmer Numbers}, J. Reine Angew. Math. \textbf{539} (2001), 75-122.

\bibitem{Deligne1} P. Deligne, \emph{La conjecture de {W}eil I}, Publ. Math. de IHES \textbf{43} (1974), 273-307.

\bibitem{Deligne2} P. Deligne, \emph{La conjecture de {W}eil II}, Publ. Math. de IHES \textbf{52} (1980), 137-252.

\bibitem{DJ} S. Dembner and V. Jain, \emph{Hyperelliptic Curves and Newform Coefficients}, \url{https://arxiv.org/abs/2007.08358}, J. Number Th., accepted for publication.

\bibitem{Jesse} A. Gafni, J. Thorner, and P.-J. Wong, \emph{Almost All Primes Satisfy the {A}tkin–{S}erre Conjecture and are Not Extremal}, Res. Number Theory \textbf{7}, no. 2 (2021), paper no. 31.

\bibitem{HM} M. Hanada and R. Madhukara, \emph{Fourier Coefficients of Level 1 {H}ecke Eigenforms}, \url{https://arxiv.org/abs/2007.08683}, Acta Arithmetica, accepted for publication.

\bibitem{Lehmer} D. H. Lehmer, \emph{The Vanishing of {R}amanujan's $\tau(n)$}, Duke Math J. \textbf{14} (1947), 429-433.

\bibitem{LR} N. Lygeros and O. Rozier, \emph{Odd Prime Values of the {R}amanujan Tau Function}, Bull. Soc. Math. France \textbf{32} (2013), 269-280.

\bibitem{Mordell} L. J. Mordell, \emph{On {M}r. {R}amanujan's $\tau(n)$}, Duke Math. J. \textbf{14} (1947), 429-433.

\bibitem{LangTrotter} M. R. Murty, V. K. Murty, and N. Saradha, \emph{Modular Forms and the {C}hebotarev Density Theorem}, Am. J. Math. \textbf{110} (1988), 253-281.

\bibitem{MMS} M. R. Murty, V. K. Murty, and T. N. Shorey, \emph{Odd Values of the {R}amanujan Tau Function}, Bull. Soc. Math. France \textbf{115} (1987), 391-395.

\bibitem{Ramanujan} S. Ramanujan, \emph{On Certain Arithmetical Functions}, Trans. Camb. Philos. Soc. \textbf{22} (1916), 159-184.

\bibitem{Atkin-Serre} J.-P. Serre, \emph{Divisibilité de Certaines Fonctions Arithmétiques}, Enseign. Math. (2), \textbf{22} (1976), 227-260.

\bibitem{Mods} H. P. F. Swinnerton-Dyer, \emph{On $\ell$-Adic Representations and Congruences for Coefficients of Moular Forms}, in \textit{Modular Functions of One Variable III}, Springer (1973), 1-55.

\end{thebibliography}
\end{document}